\newcommand\RR{\mathbb{R}}
\newcommand\Rd{\mathbb{R}^d}
\newcommand\NN{\mathbb{N}}
\newcommand\vx{\boldsymbol{x}}   
\newcommand\vy{\boldsymbol{y}}   
\newcommand\vz{\boldsymbol{z}}
\newcommand\vc{\boldsymbol{c}}
\newcommand\vphi{\boldsymbol{\phi}}
\newcommand\valpha{\boldsymbol{\alpha}}
\newcommand\vbeta{\boldsymbol{\beta}}
\newcommand\vu{\boldsymbol{u}}
\newcommand\vb{\boldsymbol{b}}
\newcommand\vd{\boldsymbol{d}}
\newcommand\vv{\boldsymbol{v}}
\newcommand\vvartheta{\boldsymbol{\vartheta}}
\newcommand\vA{\mathsf{A}}
\newcommand\vB{\mathsf{B}}
\newcommand\Sset{\mathcal{S}}
\newcommand\Banach{\mathcal{B}}
\newcommand\Cont{\mathrm{C}}
\newcommand\Leb{\mathrm{L}}
\newcommand{\norm}[1]{\left\lVert#1\right\rVert}
\newcommand{\abs}[1]{\left\lvert#1\right\rvert}
\newtheorem{theorem}{Theorem}[section]
\newtheorem{proposition}[theorem]{Proposition}
\theoremstyle{definition}
\newtheorem{definition}[theorem]{Definition}
\theoremstyle{remark}
\newtheorem{remark}[theorem]{Remark}
\numberwithin{equation}{section}
\numberwithin{figure}{section}
\numberwithin{table}{section}
\begin{document}

\begin{frontmatter}

\title{Positive Definite Multi-Kernels for Scattered Data Interpolations}

\author[add1,add2]{Qi Ye}\ead{yeqi@m.scnu.edu.cn}

\address[add1]{School of Mathematical Sciences, South China Normal University, \\ Guangzhou, Guangdong, China, 510631}
\address[add2]{Pazhou Lab, Guangzhou, Guangdong, China, 510300}

\date{}

\begin{abstract}
In this article, we use the knowledge of positive definite tensors to develop a concept of positive definite multi-kernels to construct the kernel-based interpolants of scattered data.
By the techniques of reproducing kernel Banach spaces, the optimal recoveries and error analysis of the kernel-based interpolants are shown for a special class of strictly positive definite multi-kernels.
\end{abstract}

\begin{keyword}
Kernel-based approximation method
\sep scattered data interpolation
\sep positive definite multi-kernel
\sep reproducing kernel Banach space
\sep positive definite tensor
\sep multi-linear system

\MSC[2010]
65D05 \sep 46E22.
\end{keyword}

\end{frontmatter}



\section{Introduction}\label{sec:Intr}

In many areas of practical applications, we often face a problem of reconstructing an unknown function $f$ from the scattered data.
The scattered data consist of high-dimension data points $\left\{\vx_i\right\}_{i=1}^n$ and data values $\left\{y_i\right\}_{i=1}^n$
such that $y_i=f(\vx_i)$ for all $i=1,2,\ldots,n$.
The reconstruction is to find an estimate function $s$ to approximate $f$.
Generally, $s$ is sought to interpolate the scattered data, that is, $s(\vx_i)=y_i$ for all $i=1,2,\ldots,n$.
It is well-known that the kernel-based approximation method is a fundamental approach of scattered data interpolations.
The classical kernel-based approximation method is mainly dependent of the positive definite kernels.
Here, we develop a concept of positive definite multi-kernels in Definition \ref{def:PDTK} which is a generalization of positive definite kernels.
The positive definite multi-kernels can be also used to reconstruct $f$ from the scattered data.
For examples, we compare the interpolations by a positive definite kernel $K_2:\otimes_{k=1}^2\Rd\to\RR$ and a positive definite multi-kernel
$K_4:\otimes_{k=1}^4\Rd\to\RR$.
The classical interpolant $s_2$ is composed of a kernel basis
\[
K_2(\cdot,\vx_i),\quad\text{for }i=1,2,\ldots,n,
\]
that is,
\[
s_2(\vx):=\sum_{i=1}^nc_iK_2(\vx,\vx_i),\quad\text{for }\vx\in\Rd,
\]
where the coefficients $c_1,c_2,\ldots,c_n$ are solved by a linear system
\[
\sum_{i_2=1}^nK_2(\vx_{i_1},\vx_{i_2})c_{i_2}=y_{i_1},\quad\text{for }i_1=1,2,\ldots,n.
\]
The new interpolant $s_4$ is composed of another kernel basis
\[
K_4(\cdot,\vx_{i_1},\vx_{i_2},\vx_{i_3}),\quad\text{for }i_1,i_2,i_3=1,2,\ldots,n,
\]
that is,
\[
s_4(\vx):=\sum_{i_1,i_2,i_3=1}^{n,n,n}c_{i_1}c_{i_2}c_{i_3}K_4(\vx,\vx_{i_1},\vx_{i_2},\vx_{i_3}),\quad\text{for }\vx\in\Rd,
\]
where the coefficients $c_1,c_2,\ldots,c_n$ are solved by a multi-linear system
\[
\sum_{i_2,i_3,i_4=1}^{n,n,n}K_4(\vx_{i_1},\vx_{i_2},\vx_{i_3},\vx_{i_4})c_{i_2}c_{i_3}c_{i_4}=y_{i_1},
\quad\text{for }i_1=1,2,\ldots,n.
\]
In this article, we mainly discuss how to use a special class of strictly positive definite multi-kernel $\Phi_m$ in Equation \eqref{eq:PDTK-sp} to construct the kernel-based interpolant $s_m$ in Theorem \ref{thm:PDTK-Int} which shows that the related multi-linear system exists the unique solution.
By the theorems of reproducing kernel Banach spaces in \cite{XuYe2019}, we can obtain the advanced properties of $s_m$
including optimal recoveries in Theorem \ref{thm:PDTK-Int-Opt} and error analysis
in Theorems \ref{thm:PDTK-Int-Error} and \ref{thm:PDTK-Int-Error-h}.

\section{Positive Definite Tensors and Multi-Linear Systems}\label{sec:PDT-MLS}

In this section, we review the theory of positive definite tensors which will be used to define the positive definite multi-kernels in Section \ref{sec:PDTK-RKBS}.
For convenience of the readers, the notations and operations of tensors are defined as in the book \cite{QiChenChen2019}.
We say $T_{m,n}$ a collection of all $m$th order $n$th dimensional real tensors, for example, $T_{3,n}=\RR^{n\times n\times n}$, $T_{2,n}=\RR^{n\times n}$, and $T_{1,n}=\RR^{n}$.
For $\vA_m\in T_{m,n}$, we say $\vA_m$ a symmetric tensor if all entries $a_{i_1\ldots i_m}$ of $\vA_m$ are invariant under any permutation of the indices.
For $\vc\in\RR^n$, the tensor multiplications are defined as
\[
\vA_{m}\vc^{m-1}:=\left(\sum_{i_2,\ldots,i_m=1}^{n,\ldots,n}a_{i_1i_2\ldots i_m}c_{i_2}\ldots c_{i_{m}}\right)_{i_1=1}^{n}\in\RR^n,
\]
and
\[
\vA_{m}\vc^{m}:=\sum_{i_1,\ldots,i_m=1}^{n,\ldots,n}a_{i_1\ldots i_m}c_{i_1}\ldots c_{i_{m}}\in\RR.
\]
For $\vA_m\in T_{m,n}$, we say $\vA_m$ a \emph{semi-positive definite tensor} if
\[
\vA_m\vc^m\geq0,\quad\text{for all }\vc\in\RR^n.
\]
If further $\vA_m\vc^m=0$ if and only if $\vc=0$,
then $\vA_m$ is said a \emph{positive definite tensor} same as in \cite[Definition 4.1]{WangHuangQi2018}.
Specially, the positive definite tensor $\vA_2$ is a positive definite matrix.

For any $\vb\in\RR^n$, we look at
the multi-linear system
\begin{equation}\label{eq:Amsys}
\vA_m\vc^{m-1}=\vb.
\end{equation}
If $\vA_2$ is a symmetric positive definite matrix, then Equation \eqref{eq:Amsys}
has a unique solution $\vc\in\RR^n$.
For the general cases, we will study the solutions of Equation \eqref{eq:Amsys} by the theorems of the tensor variational inequality (TVI) in \cite{WangHuangQi2018}. Let
\[
\Sset(\vA_m,\vb):=\left\{\vc\in\RR^n: \left(\vc-\vd\right)^T\left(\vA_m\vc^{m-1}-\vb\right)\geq0,
\text{ for all }\vd\in\RR^n
\right\}.
\]
Then $\Sset(\vA_m,\vb)$ is said the solutions set of TVI with $\RR^n$, $\vA_m$, and $-\vb$.
If $\vA_m$ is a positive definite tensor, then \cite[Theorem 4.2]{WangHuangQi2018} guarantees that $\Sset(\vA_m,\vb)$
is nonempty and compact.

\begin{proposition}\label{pro:PDT-MultiSys}
If $\vA_m$ is a positive definite tensor, then the solutions set of Equation \eqref{eq:Amsys} is nonempty and compact.
\end{proposition}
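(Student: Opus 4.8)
The plan is to identify the solution set of the multi-linear system \eqref{eq:Amsys} with the TVI solution set $\Sset(\vA_m,\vb)$ introduced just above, and then invoke the already-quoted \cite[Theorem 4.2]{WangHuangQi2018}. So the entire argument reduces to proving the set identity
\[
\left\{\vc\in\RR^n:\vA_m\vc^{m-1}=\vb\right\}=\Sset(\vA_m,\vb).
\]

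For the inclusion ``$\subseteq$'': if $\vc$ satisfies $\vA_m\vc^{m-1}=\vb$, then $\vA_m\vc^{m-1}-\vb=\boldsymbol{0}$, hence $(\vc-\vd)^T(\vA_m\vc^{m-1}-\vb)=0\geq0$ for every $\vd\in\RR^n$, which is precisely the defining condition of $\Sset(\vA_m,\vb)$. For the reverse inclusion ``$\supseteq$'': let $\vc\in\Sset(\vA_m,\vb)$. Since $\vd$ ranges over all of $\RR^n$, we may substitute $\vd=\vc-\vu$ for an arbitrary $\vu\in\RR^n$, turning the inequality into $\vu^T(\vA_m\vc^{m-1}-\vb)\geq0$ for all $\vu\in\RR^n$; applying this to both $\vu$ and $-\vu$ (or simply to $\vu=\ve_i$ and $\vu=-\ve_i$ for each coordinate vector $\ve_i$) forces $\vu^T(\vA_m\vc^{m-1}-\vb)=0$ for every $\vu$, so $\vA_m\vc^{m-1}-\vb=\boldsymbol{0}$, i.e. $\vc$ solves \eqref{eq:Amsys}.

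With the identity in hand, the hypothesis that $\vA_m$ is a positive definite tensor lets us apply \cite[Theorem 4.2]{WangHuangQi2018}, as recalled immediately before the statement, which gives that $\Sset(\vA_m,\vb)$ is nonempty and compact; by the identity the same holds for the solution set of \eqref{eq:Amsys}. There is essentially no obstacle here beyond being careful that the two sets are genuinely equal — not merely one-sided inclusion — so that both nonemptiness \emph{and} compactness transfer; the reverse inclusion is the only place where the full range of the test vectors $\vd$ is used, and it is what makes the variational-inequality formulation equivalent to the equation rather than strictly weaker.
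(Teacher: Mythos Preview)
Your proposal is correct and follows essentially the same approach as the paper's proof: both establish the set identity $\{\vc:\vA_m\vc^{m-1}=\vb\}=\Sset(\vA_m,\vb)$ and then invoke \cite[Theorem 4.2]{WangHuangQi2018}. The only cosmetic difference is in the reverse inclusion, where the paper picks the single test vector $\vv=-\vu$ with $\vu:=\vA_m\vc^{m-1}-\vb$ to get $-\norm{\vu}_2^2\geq0$, whereas you argue via $\pm\vu$ (or $\pm\ve_i$); both amount to the same elementary observation.
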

\begin{proof}
If we verify that $\Sset(\vA_m,\vb)$ is equal to the solutions set of Equation \eqref{eq:Amsys}, then the proof is completed by \cite[Theorem 4.2]{WangHuangQi2018}.

We first suppose that $\vc$ is a solution of Equation \eqref{eq:Amsys}. Thus, we conclude from $\vA_m\vc^{m-1}-\vb=0$ that $\left(\vc-\vd\right)^T\left(\vA_m\vc^{m-1}-\vb\right)=0$ for all $\vd\in\RR^n$, hence that $\vc\in \Sset(\vA_m,\vb)$.
%
Moreover, we suppose that $\vc\in\Sset(\vA_m,\vb)$. Let $\vv:=\vc-\vd$ and $\vu:=\vA_m\vc^{m-1}-\vb$. Thus,
$\vv^T\vu\geq0$ for any $\vv\in\RR^n$.
Since the range of $\vv$ is equal to $\RR^n$,
we take $\vv:=-\vu$ such that $0\geq-\norm{\vu}_2^2=(-\vu)^T\vu\geq0$. This shows that $\vA_m\vc^{m-1}-\vb=\vu=0$. Therefore, $\vc$ is a solution of Equation \eqref{eq:Amsys}.
\end{proof}

Same as in \cite[Definition 4.1]{WangHuangQi2018}, we say $\vA_m$ a \emph{strictly positive definite tensor}
if $\left(\vc-\vd\right)^T\left(\vA_m\vc^{m-1}-\vA_m\vd^{m-1}\right)>0$ for all $\vc,\vd\in\RR^n$ with $\vc\neq\vd$.
Obviously, if $\vA_m$ is a strictly positive definite tensor, then $\vA_m$ is a positive definite tensor.
Specially, the positive definite matrix $\vA_2$ is a strictly positive definite tensor.
If $\vA_m$ is a strictly positive definite tensor, then \cite[Theorem 4.2]{WangHuangQi2018} guarantees that $\Sset(\vA_m,\vb)$
is singleton.

\begin{proposition}\label{pro:SPDT-MultiSys}
If $\vA_m$ is a strictly positive definite tensor, then Equation \eqref{eq:Amsys} exists a unique solution.
\end{proposition}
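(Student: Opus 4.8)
The plan is to derive the statement from the two ingredients already assembled in this section: the identification of the TVI solution set $\Sset(\vA_m,\vb)$ with the solution set of Equation \eqref{eq:Amsys} carried out in the proof of Proposition \ref{pro:PDT-MultiSys}, and the fact, recorded just above, that \cite[Theorem 4.2]{WangHuangQi2018} guarantees $\Sset(\vA_m,\vb)$ is a singleton whenever $\vA_m$ is strictly positive definite. First I would observe that a strictly positive definite tensor is in particular a positive definite tensor, so the set-equality argument in the proof of Proposition \ref{pro:PDT-MultiSys} applies verbatim: $\vc$ solves \eqref{eq:Amsys} if and only if $\vc\in\Sset(\vA_m,\vb)$. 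Combining this with the singleton property of $\Sset(\vA_m,\vb)$ shows that Equation \eqref{eq:Amsys} has exactly one solution, which settles existence and uniqueness at once.

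For a more self-contained route I would split the claim into existence and uniqueness. Existence follows from Proposition \ref{pro:PDT-MultiSys}, since strict positive definiteness implies positive definiteness and hence the solution set of \eqref{eq:Amsys} is nonempty. For uniqueness, suppose $\vc_1$ and $\vc_2$ both solve \eqref{eq:Amsys}. Then $\vA_m\vc_1^{m-1}-\vA_m\vc_2^{m-1}=\vb-\vb=0$, so that
\[
\left(\vc_1-\vc_2\right)^T\left(\vA_m\vc_1^{m-1}-\vA_m\vc_2^{m-1}\right)=0.
\]
If $\vc_1\neq\vc_2$, the defining inequality of a strictly positive definite tensor forces the left-hand side to be strictly positive, a contradiction; therefore $\vc_1=\vc_2$.

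The only delicate point — more a matter of care than a real obstacle — is invoking the set-equality from Proposition \ref{pro:PDT-MultiSys} correctly, because \cite[Theorem 4.2]{WangHuangQi2018} is phrased in terms of the TVI solution set rather than Equation \eqref{eq:Amsys} directly; once that identification is recalled, the conclusion is immediate. The direct uniqueness argument above avoids even this dependence, using nothing beyond the definition of a strictly positive definite tensor, so I would present both, using the TVI route for existence and the elementary computation for uniqueness.
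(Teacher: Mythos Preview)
Your first route is exactly the paper's proof: recall the set equality $\Sset(\vA_m,\vb)=\{\text{solutions of \eqref{eq:Amsys}}\}$ from the proof of Proposition~\ref{pro:PDT-MultiSys}, then invoke \cite[Theorem~4.2]{WangHuangQi2018} for the singleton conclusion. One small remark: the set-equality step in that earlier proof does not actually use positive definiteness at all (it is a purely algebraic fact about variational inequalities over $\RR^n$), so you need not justify its applicability via ``strictly positive definite $\Rightarrow$ positive definite''; that implication is only needed if you want to cite Proposition~\ref{pro:PDT-MultiSys} itself for existence.

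Your second route---existence from Proposition~\ref{pro:PDT-MultiSys} and uniqueness directly from the defining inequality---is correct and is more elementary than what the paper does, since it avoids appealing to the TVI theorem a second time for the singleton property. The paper simply reuses the black-box citation for both existence and uniqueness, whereas your direct computation $(\vc_1-\vc_2)^T(\vA_m\vc_1^{m-1}-\vA_m\vc_2^{m-1})=0$ makes the uniqueness transparent from the definition alone.
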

\begin{proof}
As in the proof of Proposition \ref{pro:PDT-MultiSys}, $\Sset(\vA_m,\vb)$ is equal to the solutions set of Equation \eqref{eq:Amsys}. Thus, the proof is completed by \cite[Theorem 4.2]{WangHuangQi2018}.
\end{proof}

In \cite{DingWei2016,YanLingLingHe2019},
there are many kinds of multi-linear systems such as $M$-tensors, $Z^+$-tensors, and $P$-tensors to introduce the same results.
In this article, we only discuss the positive definite tensors for the scattered data interpolations.

\section{Positive Definite Multi-Kernels and Reproducing Kernel Banach Spaces}\label{sec:PDTK-RKBS}

In this section, we develop a concept of positive definite multi-kernels.
Let a domain $\Omega\subseteq\Rd$.
By \cite[Definition~6.24]{Wendland2005}, a symmetric kernel $K_2:\otimes_{k=1}^2\Omega\to\RR$ is said a positive definite kernel
if, for all $n\in\NN$ and all pairwise distinct points $\left\{\vx_i\right\}_{i=1}^n\subseteq\Omega$, the quadratic form
\[
\sum_{i_1,i_2=1}^{n,n}K_2(\vx_{i_1},\vx_{i_2})c_{i_1}c_{i_2}>0,
\]
for all $\vc:=(c_1,c_2,\cdots,c_n)^T\in\RR^n\backslash\{0\}$.
Let $\vA_2:=\left(K_2(\vx_{i_1},\vx_{i_2})\right)_{i_1,i_2=1}^{n,n}\in T_{2,n}$.
Thus, $\vA_2$ is a symmetric positive definite matrix.

A multi-kernel of order $m\in\NN$ is defined as $K_m:\otimes_{k=1}^m\Omega\to\RR$ for $m\in\NN$.
We say $K_m$ a \emph{symmetric multi-kernel} if $K_m(\vz_1,\vz_2,\cdots,\vz_m)=K_m(\vz_{i_1},\vz_{i_2},\cdots,\vz_{i_m})$
for all permutation of the pairwise distinct indices $i_1,i_2,\ldots,,i_m\in\{1,2,\ldots,m\}$.

\begin{definition}\label{def:PDTK}
A multi-kernel $K_m$ is said a \emph{semi-positive definite multi-kernel} if, for all $n\in\NN$ and all pairwise distinct points $\left\{\vx_i\right\}_{i=1}^n\subseteq\Omega$, the multiple product form
\[
\sum_{i_1,i_2,\ldots,i_m=1}^{n,n,\ldots,n}K_m(\vx_{i_1},\vx_{i_2},\cdots,\vx_{i_m})c_{i_1}c_{i_2}\cdots c_{i_{m}}
\geq0,
\]
for all $\vc:=(c_1,c_2,\cdots,c_n)^T\in\RR^n$.
If further the multiple product is equal to 0
if and only if $\vc=0$,
then $K_m$ is said a \emph{positive definite multi-kernel}.

A symmetric multi-kernel $K_m$ is said a \emph{strictly positive definite multi-kernel} if, for all $n\in\NN$ and all pairwise distinct points $\left\{\vx_i\right\}_{i=1}^n\subseteq\Omega$, the multiple product form
\[
\sum_{i_1,i_2,\ldots,i_m=1}^{n,n,\ldots,n}K_m(\vx_{i_1},\vx_{i_2},\cdots,\vx_{i_m})\left(c_{i_1}-d_{i_1}\right)\left(c_{i_2}\cdots c_{i_{m}}-d_{i_2}\cdots d_{i_{m}}\right)
>0,
\]
for all $\vc:=(c_1,c_2,\cdots,c_n)^T,\vd:=(d_1,d_2,\cdots,d_n)^T\in\RR^n$ with $\vc\neq\vd$.
\end{definition}

Obviously, the symmetric strictly positive definite multi-kernel $K_m$ is a symmetric positive definite multi-kernel.
Specially, the symmetric positive definite multi-kernel $K_2$ is a symmetric positive definite kernel and a symmetric strictly positive definite multi-kernel.
Given any pairwise distinct points $\left\{\vx_i\right\}_{i=1}^n\subseteq\Omega$, we define
\begin{equation}\label{eq:Am-K}
\vA_m:=\left(K_m(\vx_{i_1},\vx_{i_2},\cdots,\vx_{i_m})\right)_{i_1,i_2,\ldots,i_m=1}^{n,n,\ldots,n}\in T_{m,n}.
\end{equation}

\begin{proposition}\label{pro:PDTK-PDT}
If $K_m$ is a symmetric (strictly) positive definite multi-kernel, then $\vA_m$ in Equation \eqref{eq:Am-K} is a symmetric (strictly) positive definite tensor.
\end{proposition}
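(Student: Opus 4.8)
The plan is to reduce both statements to a direct unwinding of definitions: the entries of $\vA_m$ in Equation \eqref{eq:Am-K} are exactly the sampled values of $K_m$, so each tensor condition in Section \ref{sec:PDT-MLS} becomes, after expanding the tensor multiplications, the corresponding multiple product form in Definition \ref{def:PDTK}. First I would fix pairwise distinct $\left\{\vx_i\right\}_{i=1}^n\subseteq\Omega$ and write $a_{i_1\ldots i_m}:=K_m(\vx_{i_1},\ldots,\vx_{i_m})$ for the entries of $\vA_m$. For any permutation $\sigma$ of $\{1,\ldots,m\}$, the symmetry of the multi-kernel gives $a_{i_{\sigma(1)}\ldots i_{\sigma(m)}}=K_m(\vx_{i_{\sigma(1)}},\ldots,\vx_{i_{\sigma(m)}})=K_m(\vx_{i_1},\ldots,\vx_{i_m})=a_{i_1\ldots i_m}$, so $\vA_m$ is a symmetric tensor; this handles the ``symmetric'' part in every case.

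Next I would treat the (semi-)positive definite case. By the definition of $\vA_m\vc^m$,
\[
\vA_m\vc^m=\sum_{i_1,\ldots,i_m=1}^{n,\ldots,n}a_{i_1\ldots i_m}c_{i_1}\cdots c_{i_m}=\sum_{i_1,\ldots,i_m=1}^{n,\ldots,n}K_m(\vx_{i_1},\ldots,\vx_{i_m})c_{i_1}\cdots c_{i_m},
\]
which is precisely the multiple product form in Definition \ref{def:PDTK}. Hence, if $K_m$ is a positive definite multi-kernel, then $\vA_m\vc^m\ge0$ for all $\vc\in\RR^n$ and $\vA_m\vc^m=0$ if and only if $\vc=0$, i.e.\ $\vA_m$ is a positive definite tensor; the semi-positive definite statement is the same argument with the equality clause removed.

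Finally, for the strictly positive definite case I would expand the bilinear quantity from the tensor definition. Using $(\vA_m\vc^{m-1})_{i_1}=\sum_{i_2,\ldots,i_m}a_{i_1\ldots i_m}c_{i_2}\cdots c_{i_m}$ and regrouping the sum over all indices,
\[
\left(\vc-\vd\right)^T\left(\vA_m\vc^{m-1}-\vA_m\vd^{m-1}\right)=\sum_{i_1,\ldots,i_m=1}^{n,\ldots,n}a_{i_1\ldots i_m}\left(c_{i_1}-d_{i_1}\right)\left(c_{i_2}\cdots c_{i_m}-d_{i_2}\cdots d_{i_m}\right),
\]
and the right-hand side, with $a_{i_1\ldots i_m}=K_m(\vx_{i_1},\ldots,\vx_{i_m})$, is exactly the product form in the definition of a strictly positive definite multi-kernel. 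Therefore this quantity is $>0$ whenever $\vc\neq\vd$, so $\vA_m$ is a strictly positive definite tensor, which together with the symmetry established above completes the proof.

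I expect the only delicate point to be the index bookkeeping in the last display: one must confirm that contracting the last $m-1$ slots of $\vA_m$ against $\vc$ (the operation $\vA_m\vc^{m-1}$) and then pairing the remaining free slot with $\vc-\vd$ reproduces exactly the asymmetric grouping $(c_{i_1}-d_{i_1})(c_{i_2}\cdots c_{i_m}-d_{i_2}\cdots d_{i_m})$ of Definition \ref{def:PDTK}. The symmetry of $K_m$, hence of $\vA_m$, is what makes the choice of ``first slot'' immaterial and the identification clean; beyond this there is no genuine difficulty, as the whole argument is bookkeeping over the definitions.
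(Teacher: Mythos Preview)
Your proposal is correct and is exactly the approach the paper takes: the paper's proof reads in its entirety ``The proof is straightforward by the definitions of (strictly) positive definite multi-kernels and (strictly) positive definite tensors,'' and your write-up is precisely that unwinding of definitions made explicit. There is nothing to add.
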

\begin{proof}
The proof is straightforward by the definitions of (strictly) positive definite multi-kernels and (strictly) positive definite tensors.
\end{proof}

Now we look at a special class of multi-kernels related to the $p$-norm reproducing kernel Banach spaces in \cite{Wendland2005}. Same as in \cite[Section 10.4]{Wendland2005}, we suppose that the domain $\Omega$ is \emph{compact}
and the symmetric positive definite kernel $\Phi_2$ is \emph{continuous}.
Thus, the Mercer theorem assures that $\Phi_2$ possesses the absolutely uniformly convergent representation
\[
\Phi_2(\vz_1,\vz_2)=\sum_{k=1}^{\infty}\lambda_ke_k(\vz_1)e_k(\vz_2),\quad\text{for }\vz_1,\vz_2\in\Omega,
\]
where $\left\{\lambda_k:k\in\NN\right\}$ and $\left\{e_k:k\in\NN\right\}$
are the related positive eigenvalues and continuous eigenfunctions of $\Phi_2$, respectively.
Let $\phi_k:=\lambda_k^{1/2}e_k$ for all $k\in\NN$.
Same as \cite[Section 4.2]{XuYe2019}, we further suppose that
\begin{equation}\label{eq:cond-phi}
\sum_{k=1}^{\infty}\abs{\phi_k(\vx)}<\infty,\quad\text{for all }\vx\in\Omega.
\end{equation}
Thus, by \cite[Theorem 5.10]{XuYe2019}, Equation \eqref{eq:cond-phi} assures that the \emph{special} multi-kernel
\begin{equation}\label{eq:PDTK-sp}
\Phi_m(\vz_1,\vz_2,\cdots,\vz_m):=\sum_{k=1}^{\infty}\prod_{i=1}^m\phi_k(\vz_i),\quad\text{for }\vz_1,\vz_2,\ldots,\vz_m\in\Omega,
\end{equation}
is well-defined for all $m\geq2$.

\begin{remark}
In this article, the positive definite multi-kernels are consistent with the positive definite tensors in \cite{QiChenChen2019,WangHuangQi2018}.
The semi-positive definite kernels discussed here have the same meaning as the positive definite kernels in \cite{XuYe2019}.
For convenience, we will \emph{always} suppose that $K_2$ is a continuous symmetric positive definite kernel on the compact domain $\Omega$.
Moreover, the eigenvalues and eigenfunctions of $\Phi_2$ are fixed and satisfy the condition in Equation \eqref{eq:cond-phi} such that $\Phi_m$ is fixed and well-defined in the following discussions.
\end{remark}

\begin{proposition}\label{pro:Phi-sPDT}
If $m$ is a positive even integer and $K_m:=\Phi_m$, then $\vA_m$ in Equation \eqref{eq:Am-K} is a symmetric strictly positive definite tensor.
\end{proposition}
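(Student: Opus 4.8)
The plan is to exploit the ``feature-map'' structure that $\Phi_m$ inherits from the Mercer expansion of $\Phi_2$, which converts the tensor inequality defining strict positive definiteness into a one-dimensional inequality that can be treated term by term. Fix pairwise distinct points $\{\vx_i\}_{i=1}^n\subseteq\Omega$ and, for $\vc\in\RR^n$, write $u_k(\vc):=\sum_{i=1}^n\phi_k(\vx_i)c_i$ for $k\in\NN$. Substituting the definition \eqref{eq:PDTK-sp} of $\Phi_m$ into $\vA_m$ from \eqref{eq:Am-K} and interchanging the finite sums over $i_1,\dots,i_m\in\{1,\dots,n\}$ with the series over $k$ — legitimate because the hypothesis \eqref{eq:cond-phi} (already used in \cite[Theorem 5.10]{XuYe2019}) makes $\sum_k\prod_{j=1}^m\abs{\phi_k(\vx_{i_j})}$ finite — I would first establish
\[
\bigl(\vA_m\vc^{m-1}\bigr)_i=\sum_{k=1}^{\infty}\phi_k(\vx_i)\,u_k(\vc)^{m-1},\qquad i=1,\dots,n,
\]
and then, using $\sum_i(c_i-d_i)\phi_k(\vx_i)=u_k(\vc)-u_k(\vd)$,
\[
\left(\vc-\vd\right)^T\left(\vA_m\vc^{m-1}-\vA_m\vd^{m-1}\right)
=\sum_{k=1}^{\infty}\bigl(u_k(\vc)-u_k(\vd)\bigr)\bigl(u_k(\vc)^{m-1}-u_k(\vd)^{m-1}\bigr).
\]

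The next step uses the evenness of $m$: since $m-1$ is then odd, the map $t\mapsto t^{m-1}$ is strictly increasing on $\RR$, so $s-t$ and $s^{m-1}-t^{m-1}$ always carry the same sign, whence $(s-t)(s^{m-1}-t^{m-1})\ge0$ with equality if and only if $s=t$. Applying this with $s=u_k(\vc)$ and $t=u_k(\vd)$ shows that every term of the above series is nonnegative, so the whole expression is $\ge0$.

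For the strict inequality when $\vc\neq\vd$, I would argue by contradiction. If the sum were $0$, every term would vanish, forcing $u_k(\vc)=u_k(\vd)$ for all $k$, i.e.\ $\ve:=\vc-\vd$ satisfies $u_k(\ve)=\sum_i\phi_k(\vx_i)e_i=0$ for every $k\in\NN$. Recalling $\Phi_2(\vz_1,\vz_2)=\sum_k\phi_k(\vz_1)\phi_k(\vz_2)$ and performing the same interchange of sums,
\[
\sum_{i_1,i_2=1}^{n,n}\Phi_2(\vx_{i_1},\vx_{i_2})e_{i_1}e_{i_2}=\sum_{k=1}^{\infty}u_k(\ve)^2=0.
\]
Since $\Phi_2$ is a continuous symmetric positive definite kernel and the $\vx_i$ are pairwise distinct, this quadratic form is strictly positive for $\ve\neq0$; hence $\ve=0$, i.e.\ $\vc=\vd$, contradicting $\vc\neq\vd$. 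Therefore the expression is $>0$ for all $\vc\neq\vd$, and together with the symmetry of $\vA_m$ (Proposition \ref{pro:PDTK-PDT}) this is precisely the assertion that $\vA_m$ is a symmetric strictly positive definite tensor.

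The routine part is justifying the term-by-term manipulation of the infinite series, which is immediate from \eqref{eq:cond-phi}; the conceptual heart — and the only place the \emph{evenness} of $m$ is used — is the scalar monotonicity $(s-t)(s^{m-1}-t^{m-1})\ge0$. The step I expect to require the most care is the strictness, which is not visible from the monotonicity alone and has to be forced by invoking positive definiteness of the underlying kernel $\Phi_2$; for odd $m$ this step genuinely fails, and $\Phi_m$ of odd order need not be positive definite.
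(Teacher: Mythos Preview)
Your proposal is correct and follows essentially the same route as the paper: both expand $\vA_m$ via the Mercer feature map to obtain $(\vc-\vd)^T(\vA_m\vc^{m-1}-\vA_m\vd^{m-1})=\sum_k(\alpha_k-\beta_k)(\alpha_k^{m-1}-\beta_k^{m-1})$ with $\alpha_k=u_k(\vc)$, $\beta_k=u_k(\vd)$, then argue term-by-term nonnegativity from the odd exponent $m-1$, and finally force strictness by appealing to the positive definiteness of $\Phi_2$ (equivalently $\vA_2$) on $\vc-\vd$. Your treatment is in fact slightly tidier than the paper's --- you use the one-line monotonicity of $t\mapsto t^{m-1}$ where the paper does a sign-by-sign case split, and you make the absolute-convergence justification for the interchange of sums explicit --- but these are cosmetic rather than substantive differences.
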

\begin{proof}
Equation \eqref{eq:PDTK-sp} shows that $\Phi_m$ is a symmetric multi-kernel.
Thus, $\vA_m$ is a symmetric tensor.
We take any $\vc,\vd\in\RR^n$ with $\vc\neq\vd$. By the definition of strictly positive definite tensors, if we prove that
$(\vc-\vd)^T\left(\vA_m\vc^{m-1}-\vA_m\vd^{m-1}\right)>0$,
then the proof is complete.

Let $\vv_k:=(\phi_k(\vx_1),\phi_k(\vx_2),\cdots,\phi_k(\vx_n))^T$ for all $k\in\NN$. We conclude from Equation \eqref{eq:PDTK-sp} that
$\vA_m=\sum_{k=1}^{\infty}\vv_k^{\otimes m}$, hence that
$\vA_m\vc^{m-1}=\sum_{k=1}^{\infty}(\vv_k^T\vc)^{m-1}\vv_k$ and
$\vA_m\vd^{m-1}=\sum_{k=1}^{\infty}(\vv_k^T\vd)^{m-1}\vv_k$.
Let $\alpha_k:=\vv_k^T\vc$ and $\beta_k:=\vv_k^T\vd$ for all $k\in\NN$.
Thus,
\[
(\vc-\vd)^T\left(\vA_m\vc^{m-1}-\vA_m\vd^{m-1}\right)
=\sum_{k=1}^{\infty}\left(\alpha_k-\beta_k\right)\left(\alpha_k^{m-1}-\beta_k^{m-1}\right).
\]
Now we verify that $\left(\alpha_k-\beta_k\right)\left(\alpha_k^{m-1}-\beta_k^{m-1}\right)$ is nonnegative for all $k\in\NN$.
Since $m$ is even, $m-1$ is odd.
If $\alpha_k\geq0$ and $\beta_k\geq0$,
then $\alpha_k-\beta_k$ and $\alpha_k^{m-1}-\beta_k^{m-1}$ have the same sign.
If $\alpha_k\geq0$ and $\beta_k\leq0$, then $\alpha_k-\beta_k\geq0$ and $\alpha_k^{m-1}-\beta_k^{m-1}\geq0$.
Thus, $\left(\alpha_k-\beta_k\right)\left(\alpha_k^{m-1}-\beta_k^{m-1}\right)\geq0$.
Moreover, since $\left(\alpha_k-\beta_k\right)\left(\alpha_k^{m-1}-\beta_k^{m-1}\right)=\left(\beta_k-\alpha_k\right)\left(\beta_k^{m-1}-\alpha_k^{m-1}\right)$, the assertion also holds. Therefore, we conclude that
$(\vc-\vd)^T\left(\vA_m\vc^{m-1}-\vA_m\vd^{m-1}\right)\geq0$.

Since $\Phi_2$ is a symmetric positive definite kernel,
$\vA_2$ is a symmetric positive definite matrix.
Thus, we conclude from $\sum_{k=1}^{\infty}\left(\alpha_k-\beta_k\right)^2=(\vc-\vd)^T\left(\vA_2\vc-\vA_2\vd\right)=\vA_2(\vc-\vd)^2>0$
that $\alpha_k-\beta_k\neq0$ such that $\left(\alpha_k-\beta_k\right)\left(\alpha_k^{m-1}-\beta_k^{m-1}\right)\neq0$ for at least one $k$,
hence that $(\vc-\vd)^T\left(\vA_m\vc^{m-1}-\vA_m\vd^{m-1}\right)$ is positive.
\end{proof}

\begin{proposition}\label{pro:Phi-PDTK}
If $m$ is a positive even integer, then $\Phi_m$ is a symmetric strictly positive definite multi-kernel.
\end{proposition}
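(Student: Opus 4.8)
The plan is to deduce this directly from Proposition \ref{pro:Phi-sPDT}, which already supplies the tensor-level inequality we need for every admissible configuration of points. First I would note, from Equation \eqref{eq:PDTK-sp}, that $\Phi_m(\vz_1,\ldots,\vz_m)=\sum_{k=1}^{\infty}\prod_{i=1}^m\phi_k(\vz_i)$ is manifestly invariant under permutations of $\vz_1,\ldots,\vz_m$, so $\Phi_m$ is a symmetric multi-kernel. This settles the symmetry requirement in Definition \ref{def:PDTK}, and it is also what allows us to invoke the (strictly) positive definite tensor machinery, which is phrased for symmetric tensors.

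Next I would fix an arbitrary $n\in\NN$ and arbitrary pairwise distinct points $\left\{\vx_i\right\}_{i=1}^n\subseteq\Omega$, and form the tensor $\vA_m$ of Equation \eqref{eq:Am-K} with $K_m=\Phi_m$. Since $m$ is a positive even integer, Proposition \ref{pro:Phi-sPDT} guarantees that $\vA_m$ is a symmetric strictly positive definite tensor, i.e.
\[
\left(\vc-\vd\right)^T\left(\vA_m\vc^{m-1}-\vA_m\vd^{m-1}\right)>0,\quad\text{for all }\vc,\vd\in\RR^n\text{ with }\vc\neq\vd.
\]
It then remains only to translate this into the multi-kernel language of Definition \ref{def:PDTK}.

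That last step is a routine rewriting: expanding $\vA_m\vc^{m-1}$ and $\vA_m\vd^{m-1}$ via the definition of the tensor multiplication, taking the inner product with $\vc-\vd$, and regrouping the summations over $i_2,\ldots,i_m$, one obtains the identity
\[
\left(\vc-\vd\right)^T\left(\vA_m\vc^{m-1}-\vA_m\vd^{m-1}\right)
=\sum_{i_1,i_2,\ldots,i_m=1}^{n,n,\ldots,n}\Phi_m(\vx_{i_1},\vx_{i_2},\cdots,\vx_{i_m})\left(c_{i_1}-d_{i_1}\right)\left(c_{i_2}\cdots c_{i_m}-d_{i_2}\cdots d_{i_m}\right).
\]
Combining this with the strict inequality above, and recalling that $n$ and the points $\left\{\vx_i\right\}_{i=1}^n$ were arbitrary, yields exactly the defining condition of a symmetric strictly positive definite multi-kernel. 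I do not expect any real obstacle here: all the analytic content (evenness of $m$, the rank-one decomposition $\vA_m=\sum_k\vv_k^{\otimes m}$, and the positive definiteness of $\vA_2$) has already been absorbed into Proposition \ref{pro:Phi-sPDT}, and the present proposition is essentially the converse direction of Proposition \ref{pro:PDTK-PDT} applied to that conclusion.
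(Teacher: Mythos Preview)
Your proposal is correct and follows essentially the same route as the paper: invoke Proposition~\ref{pro:Phi-sPDT} to obtain that the tensor $\vA_m$ built from $\Phi_m$ at any pairwise distinct points is symmetric strictly positive definite, and then read off the defining inequality of Definition~\ref{def:PDTK}. The paper's proof simply states that this last translation is ``straightforward by Definition~\ref{def:PDTK},'' whereas you have written out the identity explicitly; there is no substantive difference.
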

\begin{proof}
Proposition \ref{pro:Phi-sPDT} guarantees that $\vA_m$ is a symmetric strictly positive definite tensor.
Thus, the proof is straightforward by Definition \ref{def:PDTK}.
\end{proof}

%
%
%
For examples, the strictly positive definite multi-kernels can be constructed by Gaussian kernels and power series kernels such as in \cite[Sections 4.4 and 4.5]{XuYe2019}.

Finally, we introduce the reproducing kernel Banach spaces defined by the positive definite kernels such as in \cite[Chapter 4]{XuYe2019}.
Since $\Phi_2$ is a symmetric positive definite kernel, the eigenfunctions $\left\{e_k:k\in\NN\right\}$ of $\Phi_2$ can be an orthonormal basis of $\Leb_2(\Omega)$.
Thus, $\left\{\phi_k:k\in\NN\right\}$ is a basis of $\Leb_2(\Omega)$.
Let $\vphi:=\left(\phi_1,\phi_2,\cdots,\phi_k,\cdots\right)^T$. We define a normed space
\[
\Banach^{p}:=\left\{f:=\valpha^T\vphi:\valpha\in\RR^{\infty}\text{ and }\norm{\valpha}_{p}<\infty\right\},
\]
equipped with the norm $\norm{f}_{\Banach^{p}}:=\norm{\valpha}_{p}$, where $1<p<\infty$.
By Equation \eqref{eq:cond-phi}, \cite[Theorem 4.3]{XuYe2019} guarantees that
$\Banach^{p}$ is a two-sided reproducing kernel Banach space and $\Phi_2$ is the two-sided reproducing kernel of $\Banach^{p}$,
more precisely,
(i) $\Phi_2(\vz_1,\cdot)\in\Banach^{q}$, (ii) $\langle f,\Phi_2(\vz_1,\cdot) \rangle=f(\vz_1)$,
(iii) $\Phi_2(\cdot,\vz_2)\in\Banach^{p}$, and (iv) $\langle \Phi_2(\cdot,\vz_2),g \rangle=g(\vz_2)$,
for all $\vz_1,\vz_2\in\Omega$, all $f\in\Banach^{p}$, and all $g\in\Banach^{q}$, where $q:=p/(p-1)$, the dual space of $\Banach^{p}$ is isometrically equivalent to $\Banach^{q}$, and $\langle \cdot,\cdot \rangle$ represents the dual bilinear product of $\Banach^{p}\times\Banach^{q}$, that is,
$\langle f,g \rangle=\valpha^T\vbeta$ for $f=\valpha^T\vphi$ and $g=\vbeta^T\vphi$.
Moreover, \cite[Proposition 4.4]{XuYe2019} guarantees that $\Banach^{p}\subseteq\Cont(\Omega)$.
The G\^{a}teaux derivative of $\norm{\cdot}_{\Banach^{p}}$ at $f\neq0$ has the form
\begin{equation}\label{eq:GD-p}
d_G\norm{\cdot}_{\Banach^{p}}(f)=\sum_{k=1}^{\infty}\left(\frac{\alpha_k\abs{\alpha_k}^{p-2}}{\norm{\valpha}_p^{p-1}}\right)\phi_k,
\quad\text{for }f=\valpha^T\vphi\in\Banach^{p}.
\end{equation}
In Section \ref{sec:IntOptErr}, we will use the positive definite multi-kernel $\Phi_m$ to construct the interpolant and analyze its properties in the reproducing kernel Banach space $\Banach^{m/(m-1)}$ such as optimal recoveries and error analysis when $m$ is a positive even integer.

\section{Interpolations, Optimal Recoveries, and Error Analysis}\label{sec:IntOptErr}

In this section, we discuss how to construct the interpolant $s_m$ from the scattered data by the strictly positive definite multi-kernel $\Phi_m$
in Equation \eqref{eq:PDTK-sp}. Suppose that the data $(\vx_1,y_1),(\vx_2,y_2),\ldots,(\vx_n,y_n)$ compose of the pairwise distinct points $\left\{\vx_i\right\}_{i=1}^n\subseteq\Omega\subseteq\Rd$ and the values $\left\{y_i\right\}_{i=1}^n\subseteq\RR$
evaluated by some function $f\in\Cont(\Omega)$, that is,
\begin{equation}\label{eq:int-cond}
y_1:=f(\vx_1),y_2:=f(\vx_2),\ldots,y_n:=f(\vx_n).
\end{equation}
Let $\vy:=\left(y_1,y_2,\ldots,y_n\right)^T\in\RR^n$.
Let $\vA_m$ be a tensor defined in Equation \eqref{eq:Am-K} by the multi-kernel $K_m:=\Phi_m$ and the data points $\left\{\vx_i\right\}_{i=1}^n$.
Different from the classical kernel-based interpolations, the basis of $s_m$ composes of
$\Phi_m(\cdot,\vx_{i_1},\vx_{i_2},\cdots,\vx_{i_{m-1}})$ for $i_1,i_2,\ldots,i_{m-1}=1,2,\ldots,n$.
Let a tensor function
\[
\vB_m(\vx):=\left(\Phi_m(\vx,\vx_{i_1},\vx_{i_2},\ldots,\vx_{i_{m-1}})\right)_{i_1,i_2,\ldots,i_{m-1}=1}^{n,n,\ldots,n}\in T_{m-1,n},
\quad\text{for }\vx\in\Omega.
\]
Thus, $\vA_m=\left(\vB_m(\vx_i)\right)_{i=1}^n$.

\begin{theorem}\label{thm:PDTK-Int}
If $m$ is a positive even integer, then
$s_m$ has the from
\[
s_m(\vx):=\vB_m(\vx)\vc^{m-1},\quad\text{for }\vx\in\Omega,
\]
such that
\[
s_m(\vx_1)=y_1,s_m(\vx_2)=y_2,\ldots,s_m(\vx_n)=y_n,
\]
where the coefficients $\vc\in\RR^n$ are uniquely solved by the multi-linear system
\begin{equation}\label{eq:intr-2}
\vA_m\vc^{m-1}=\vy.
\end{equation}
\end{theorem}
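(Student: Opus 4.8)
The plan is to reduce the claim to the results already established for strictly positive definite tensors, together with a coordinatewise bookkeeping. First I would invoke Proposition \ref{pro:Phi-sPDT}: since $m$ is a positive even integer and $K_m:=\Phi_m$, the tensor $\vA_m$ defined in Equation \eqref{eq:Am-K} is a symmetric strictly positive definite tensor. Then Proposition \ref{pro:SPDT-MultiSys}, applied with the right-hand side $\vb:=\vy$, guarantees that the multi-linear system \eqref{eq:intr-2}, namely $\vA_m\vc^{m-1}=\vy$, possesses a unique solution $\vc\in\RR^n$. This settles the existence and uniqueness of the coefficients, so the only remaining task is to verify that the function $s_m(\vx):=\vB_m(\vx)\vc^{m-1}$ built from these coefficients actually interpolates the data.

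For the interpolation conditions I would simply expand the tensor multiplication in coordinates. For each fixed index $i_1\in\{1,2,\ldots,n\}$, the $i_1$-th component of $\vA_m\vc^{m-1}$ is $\sum_{i_2,\ldots,i_m=1}^{n,\ldots,n}\Phi_m(\vx_{i_1},\vx_{i_2},\ldots,\vx_{i_m})c_{i_2}\cdots c_{i_m}$, which by the definition of the tensor function $\vB_m$ is exactly $\vB_m(\vx_{i_1})\vc^{m-1}=s_m(\vx_{i_1})$; this is just the identity $\vA_m=(\vB_m(\vx_i))_{i=1}^n$ already recorded before the theorem statement. Since $\vc$ solves \eqref{eq:intr-2}, this same component equals $y_{i_1}$, and therefore $s_m(\vx_{i_1})=y_{i_1}$ for every $i_1=1,2,\ldots,n$, which is precisely $s_m(\vx_1)=y_1,\ s_m(\vx_2)=y_2,\ \ldots,\ s_m(\vx_n)=y_n$.

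I do not anticipate a genuine obstacle: the theorem is essentially an assembly of Propositions \ref{pro:Phi-sPDT} and \ref{pro:SPDT-MultiSys} with the identity $\vA_m=(\vB_m(\vx_i))_{i=1}^n$. The only point that deserves a moment of care is checking where the parity hypothesis is actually used. It enters solely through Proposition \ref{pro:Phi-sPDT}, whose proof exploits that $m-1$ is odd (so $t\mapsto t^{m-1}$ is strictly increasing) to upgrade positive definiteness to \emph{strict} positive definiteness, and it is this strictness that yields a \emph{unique} solution rather than merely a nonempty compact solution set. For odd $m$ one would only be able to apply Proposition \ref{pro:PDT-MultiSys}, giving a nonempty compact solutions set but not a singleton, so the statement as phrased genuinely relies on $m$ being even.
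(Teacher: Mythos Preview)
Your proposal is correct and follows exactly the paper's own proof: invoke Proposition \ref{pro:Phi-sPDT} to get that $\vA_m$ is strictly positive definite, then Proposition \ref{pro:SPDT-MultiSys} for the unique solvability of \eqref{eq:intr-2}, and finally read off the interpolation conditions from $\vA_m=(\vB_m(\vx_i))_{i=1}^n$. Your additional coordinatewise check and the discussion of where the parity hypothesis enters are welcome elaborations but do not depart from the paper's argument.
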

\begin{proof}
Proposition \ref{pro:Phi-sPDT} guarantees that $\vA_m$ is a symmetric strictly positive definite tensor.
Thus, by Proposition \ref{pro:SPDT-MultiSys}, multi-linear system \eqref{eq:intr-2} has the unique solution $\vc$.
This assures that $s_m$ satisfies the interpolation conditions.
\end{proof}

\begin{remark}
If $s_m$ is constructed by the general positive definite multi-kernel $K_m$, then $\vA_m$ is a symmetric positive definite tensor such that
multi-linear system \eqref{eq:intr-2} still has the solution $\vc$. But, the solution $\vc$ may not be unique by Proposition \ref{pro:PDT-MultiSys} such that the interpolant $s_m$ may not be unique.
\end{remark}

Equation \eqref{eq:cond-phi} shows that $\Phi_m(\cdot,\vx_{i_1},\vx_{i_2},\cdots,\vx_{i_{m-1}})\in\Banach^{m/(m-1)}$;
hence $s_m\in\Banach^{m/(m-1)}$. Now we show the optimal recovery of $s_m$
by the theorems in
\cite{XuYe2019}.

\begin{theorem}\label{thm:PDTK-Int-Opt}
If $m$ is a positive even integer, then $s_m$ is the minimizer of the norm-minimal interpolation
\begin{equation}\label{eq:opt-RKBS}
\min_{f\in\Banach^{m/(m-1)}}\norm{f}_{\Banach^{m/(m-1)}}\text{ subjected to }
f(\vx_i)=y_i\text{ for all }i=1,2,\ldots,n.
\end{equation}
\end{theorem}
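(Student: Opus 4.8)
The plan is to use the two-sided reproducing kernel Banach space structure of $\Banach^{m/(m-1)}$ from \cite{XuYe2019} together with the explicit G\^ateaux derivative \eqref{eq:GD-p}, and to verify the first-order optimality condition for the convex program \eqref{eq:opt-RKBS}: the norm gradient of the objective at $s_m$ should lie in the span of the interpolation functionals $\delta_{\vx_1},\ldots,\delta_{\vx_n}$, which by the reproducing property (ii) are represented by $\Phi_2(\vx_1,\cdot),\ldots,\Phi_2(\vx_n,\cdot)\in\Banach^{m}$. Once this is checked, a routine convexity estimate forces $\norm{s_m}_{\Banach^{m/(m-1)}}\le\norm{f}_{\Banach^{m/(m-1)}}$ for every feasible $f$.

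First I would rewrite $s_m$ in the Schauder basis $\{\phi_k\}$. Writing $p:=m/(m-1)$ and $q:=m$, and setting $\vv_k:=(\phi_k(\vx_1),\ldots,\phi_k(\vx_n))^T$ and $\alpha_k:=\vv_k^T\vc$ as in the proof of Proposition \ref{pro:Phi-sPDT}, expanding \eqref{eq:PDTK-sp} and collapsing the inner sums gives $s_m=\vB_m(\cdot)\vc^{m-1}=\sum_{k=1}^{\infty}\alpha_k^{m-1}\phi_k$, i.e. $s_m=(\valpha')^T\vphi$ with $\valpha'=(\alpha_k^{m-1})_{k}$. If $\vy=0$, then $\vc=0$ is the unique solution of \eqref{eq:intr-2}, so $s_m=0$ and the claim is trivial; hence I may assume $\vy\ne0$, so $s_m\ne0$ and \eqref{eq:GD-p} is applicable.

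The crux is the computation of $d_G\norm{\cdot}_{\Banach^{p}}(s_m)$. Since $m$ is even, $m-1$ is odd, so $\abs{\alpha_k^{m-1}}=\abs{\alpha_k}^{m-1}$, and because $p-2=(2-m)/(m-1)$ a short sign case-analysis yields $\alpha'_k\abs{\alpha'_k}^{p-2}=\alpha_k^{m-1}\abs{\alpha_k}^{2-m}=\alpha_k$ for every $k$. Substituting into \eqref{eq:GD-p} and using $\Phi_2(\vx_i,\cdot)=\sum_k\phi_k(\vx_i)\phi_k$ together with $\alpha_k=\sum_i\phi_k(\vx_i)c_i$,
\[
d_G\norm{\cdot}_{\Banach^{p}}(s_m)=\norm{\valpha'}_p^{-(p-1)}\sum_{k=1}^{\infty}\alpha_k\phi_k=\norm{\valpha'}_p^{-(p-1)}\sum_{i=1}^{n}c_i\,\Phi_2(\vx_i,\cdot).
\]
In particular $d_G\norm{\cdot}_{\Banach^{p}}(s_m)\in\Span\{\Phi_2(\vx_i,\cdot):i=1,\ldots,n\}\subseteq\Banach^{q}$, which is the optimality condition I was after.

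Finally I would close with the standard argument. Let $f\in\Banach^{p}$ satisfy $f(\vx_i)=y_i$ for all $i$. Since $s_m$ interpolates the same data, property (ii) gives $\langle f-s_m,\Phi_2(\vx_i,\cdot)\rangle=f(\vx_i)-s_m(\vx_i)=0$ for each $i$, hence $\langle f-s_m,\,d_G\norm{\cdot}_{\Banach^{p}}(s_m)\rangle=0$ by the display above. Convexity of $\norm{\cdot}_{\Banach^{p}}$ and the first-order inequality $\norm{f}_{\Banach^{p}}\ge\norm{s_m}_{\Banach^{p}}+\langle f-s_m,\,d_G\norm{\cdot}_{\Banach^{p}}(s_m)\rangle$ then give $\norm{f}_{\Banach^{p}}\ge\norm{s_m}_{\Banach^{p}}$, so $s_m$ solves \eqref{eq:opt-RKBS}; uniqueness of the minimizer follows from the uniform (hence strict) convexity of $\Banach^{p}$ for $1<p<\infty$. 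The one genuinely substantive point is the exponent identity $\alpha'_k\abs{\alpha'_k}^{p-2}=\alpha_k$ and its reliance on $m$ being even: this is exactly what makes $\Banach^{m/(m-1)}$ the correct ambient space and collapses the norm gradient at $s_m$ back onto $\Span\{\Phi_2(\vx_i,\cdot)\}$, while everything else is the textbook characterization of norm-minimal interpolation in a smooth strictly convex reproducing kernel Banach space, as developed in \cite{XuYe2019}.
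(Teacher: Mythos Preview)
Your proof is correct but runs in the opposite direction from the paper's. The paper argues top-down: it invokes \cite[Lemma~2.22]{XuYe2019} to assert that the norm-minimal interpolation problem \eqref{eq:opt-RKBS} has a unique minimizer $s$ whose G\^ateaux derivative lies in $\Span\{\Phi_2(\vx_i,\cdot)\}$, then inverts the formula \eqref{eq:GD-p} to deduce that $s=\vB_m(\cdot)\vc^{m-1}$ for some $\vc$ satisfying $\vA_m\vc^{m-1}=\vy$, and finally identifies $s$ with $s_m$ via the uniqueness of the multi-linear system from Theorem~\ref{thm:PDTK-Int}. You instead argue bottom-up: you start from the already-constructed $s_m$, expand it as $\sum_k\alpha_k^{m-1}\phi_k$, compute $d_G\norm{\cdot}_{\Banach^{p}}(s_m)$ directly via the exponent identity $\alpha_k^{m-1}\abs{\alpha_k^{m-1}}^{p-2}=\alpha_k$, observe that it lands in $\Span\{\Phi_2(\vx_i,\cdot)\}$, and then close with the first-order convexity inequality and strict convexity of $\ell^p$. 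Both routes hinge on the same algebraic fact (even $m$ makes the exponents collapse), but your argument is more self-contained, replacing the black-box citation of \cite[Lemma~2.22]{XuYe2019} with an explicit convexity estimate; the paper's route, on the other hand, gives as a byproduct an independent existence proof for the solution of \eqref{eq:intr-2} via the existence of the abstract minimizer.
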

\begin{proof}
If $\vy=0$, then $s_m=0$ and $s_m$ is the unique minimizer of minimization \eqref{eq:opt-RKBS}.
If $\vy\neq0$, then the minimizer of minimization \eqref{eq:opt-RKBS} is nonzero.
For convenience, we suppose that $\vy\neq0$.
\cite[Lemma~2.22]{XuYe2019} guarantees that minimization \eqref{eq:opt-RKBS} has the unique minimizer $s$ such that its Gateaux derivative $d_G\norm{\cdot}_{\Banach^{m/(m-1)}}(s)=\sum_{i=1}^n\beta_i\Phi_2(\vx_i,\cdot)$,
where the constants $\beta_1,\ldots,\beta_n\in\RR$. By the expansions of $\Phi_2$, the Gateaux derivative can be rewritten as
\begin{equation}\label{eq:PDTK-Int-Opt-1}
d_G\norm{\cdot}_{\Banach^{m/(m-1)}}(s)=\sum_{k=1}^{\infty}\left(\sum_{i=1}^n\beta_i\phi_k(\vx_i)\right)\phi_k.
\end{equation}
Combining Equations \eqref{eq:GD-p} and \eqref{eq:PDTK-Int-Opt-1}, the coefficients $\valpha$ of $s$ has the form
\begin{equation}\label{eq:PDTK-Int-Opt-2}
\frac{\alpha_k\abs{\alpha_k}^{m/(m-1)-2}}{\norm{\valpha}_{m/(m-1)}^{m/(m-1)-1}}=\sum_{i=1}^n\beta_i\phi_k(\vx_i),
\quad\text{for }k\in\NN.
\end{equation}
Let $c_i:=\norm{\valpha}_{m/(m-1)}^{m/(m-1)-1}\beta_i$ for all $i=1,2,\ldots,n$. Thus, Equation \eqref{eq:PDTK-Int-Opt-2} shows that
\begin{equation}\label{eq:PDTK-Int-Opt-3}
s(\vx)=\sum_{k=1}^{\infty}\left(\sum_{i=1}^nc_i\phi_k(\vx_i)\abs{\sum_{j=1}^nc_j\phi_k(\vx_j)}^{m-2}\right)\phi_k(\vx),\quad\text{for }\vx\in\Omega.
\end{equation}
Expanding of Equation \eqref{eq:PDTK-Int-Opt-3}, we have
\[
\begin{split}
s(\vx)&=\sum_{k=1}^{\infty}\sum_{i_1,\ldots,i_{m-1}=1}^{n,\ldots,n}\phi_k(\vx)\prod_{j=1}^{m-1}c_{i_j}\phi_n(\vx_{i_j})
=\sum_{i_1,\ldots,i_{m-1}=1}^{n,\ldots,n}\prod_{j=1}^{m-1}c_{i_j}\sum_{k=1}^{\infty}\phi_k(\vx)\prod_{l=1}^{m-1}\phi_n(\vx_{i_l})\\
&=\sum_{i_1,\ldots,i_{m-1}=1}^{n,\ldots,n}c_{i_1}c_{i_2}\cdots c_{i_{m-1}}\Phi_m(\vx,\vx_{i_1},\vx_{i_2},\cdots,\vx_{i_{m-1}})
=\vB_m(\vx)\vc^{m-1}.
\end{split}
\]
Since $s(\vx_i)=y_i$ for all $i=1,2,\ldots,n$, we have $\vA_m\vc^{m-1}=\vy$.
Moreover, Proposition \ref{thm:PDTK-Int} guarantees that the solution of multi-linear system \eqref{eq:intr-2} is unique. This assures that $s(\vx)=s_m(\vx)$.
\end{proof}

\begin{theorem}\label{thm:PDTK-Int-Norm}
If $m$ is a positive even integer, then
\[
\norm{s_m}_{\Banach^{m/(m-1)}}=\left(\vA_m\vc^m\right)^{1-1/m}.
\]
\end{theorem}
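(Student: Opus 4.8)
The plan is to expand $s_m$ directly in the basis $\{\phi_k:k\in\NN\}$ of $\Banach^{m/(m-1)}$, read off its coefficient sequence, and then match the resulting $\ell^{m/(m-1)}$-norm against the scalar $\vA_m\vc^m$.

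First I would substitute the series \eqref{eq:PDTK-sp} for $\Phi_m$ into the definition $s_m(\vx)=\vB_m(\vx)\vc^{m-1}=\sum_{i_1,\dots,i_{m-1}=1}^{n,\dots,n}c_{i_1}\cdots c_{i_{m-1}}\Phi_m(\vx,\vx_{i_1},\dots,\vx_{i_{m-1}})$ and interchange the finite sums over $i_1,\dots,i_{m-1}$ with the series over $k$, which is legitimate by the absolute convergence ensured by \eqref{eq:cond-phi} (the same manipulation already used in the expansion in the proof of Theorem \ref{thm:PDTK-Int-Opt}). Writing $\vv_k:=(\phi_k(\vx_1),\dots,\phi_k(\vx_n))^T$ and $t_k:=\vv_k^T\vc$ as in the proof of Proposition \ref{pro:Phi-sPDT}, the expansion collapses to $s_m=\sum_{k=1}^{\infty}t_k^{m-1}\phi_k$; that is, the coefficient vector of $s_m$ in $\Banach^{m/(m-1)}$ is $\valpha=(t_k^{m-1})_{k\in\NN}$, and $\norm{\valpha}_{m/(m-1)}<\infty$ because $s_m\in\Banach^{m/(m-1)}$ as already noted.

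Next I would compute the two sides. Setting $p:=m/(m-1)$, one has $|\alpha_k|^{p}=|t_k|^{(m-1)p}=|t_k|^{m}$, and since $m$ is even $|t_k|^m=t_k^m$; hence $\norm{s_m}_{\Banach^{m/(m-1)}}^{p}=\norm{\valpha}_{p}^{p}=\sum_{k=1}^{\infty}t_k^{m}$. On the other hand, Proposition \ref{pro:Phi-sPDT} gives $\vA_m=\sum_{k=1}^{\infty}\vv_k^{\otimes m}$, so $\vA_m\vc^m=\sum_{k=1}^{\infty}(\vv_k^T\vc)^m=\sum_{k=1}^{\infty}t_k^m$. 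Comparing the two, $\norm{s_m}_{\Banach^{m/(m-1)}}^{m/(m-1)}=\vA_m\vc^m$, and raising to the power $(m-1)/m=1-1/m$ gives the claim. (When $\vy=0$, Theorem \ref{thm:PDTK-Int} forces $\vc=0$, so $s_m=0$ and both sides vanish.)

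There is no serious obstacle: the argument is essentially bookkeeping once the coefficient sequence of $s_m$ is identified. The one point that needs care is the exponent arithmetic combined with the parity of $m$ — it is precisely the evenness of $m$ that turns $\sum_k|t_k|^m$ (coming from the norm) into $\sum_k t_k^m$ (coming from $\vA_m\vc^m$), making the two quantities literally equal rather than merely equivalent. I would also be careful to record that the term-by-term expansion of $s_m$ holds as an identity in $\Banach^{m/(m-1)}$, not merely pointwise on $\Omega$, which once more follows from \eqref{eq:cond-phi}.
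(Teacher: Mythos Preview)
Your argument is correct, and it takes a genuinely different route from the paper's own proof. The paper does not read off the coefficient sequence of $s_m$ directly; instead it invokes the structure obtained in the proof of Theorem~\ref{thm:PDTK-Int-Opt}: there one learns that $d_G\norm{\cdot}_{\Banach^{m/(m-1)}}(s_m)=\sum_{i=1}^n\beta_i\Phi_2(\vx_i,\cdot)$ with $\beta_i=\norm{s_m}_{\Banach^{m/(m-1)}}^{-1/(m-1)}c_i$, and then uses the duality identity $\norm{s_m}_{\Banach^{m/(m-1)}}=\langle s_m,d_G\norm{\cdot}_{\Banach^{m/(m-1)}}(s_m)\rangle$ together with the reproducing property $\langle s_m,\Phi_2(\vx_i,\cdot)\rangle=s_m(\vx_i)=\vB_m(\vx_i)\vc^{m-1}$ to arrive at $\norm{s_m}_{\Banach^{m/(m-1)}}^{m/(m-1)}=\vA_m\vc^m$. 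Your approach is more elementary and self-contained: it avoids the G\^{a}teaux derivative and the reproducing property entirely, and it makes transparent exactly where the evenness of $m$ enters (turning $\sum_k|t_k|^m$ into $\sum_k t_k^m$). It also shows, with no extra work, that the identity $\norm{\vB_m(\cdot)\vc^{m-1}}_{\Banach^{m/(m-1)}}^{m/(m-1)}=\vA_m\vc^m$ holds for \emph{every} $\vc\in\RR^n$, not only for the interpolation coefficients. The paper's route, by contrast, ties the computation to the optimal-recovery characterization of $s_m$, which is conceptually pleasing but less direct.
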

\begin{proof}
If $s_m=0$, then the proof is straightforward.
For convenience, we suppose that $s_m\neq0$. As in the proof of Theorem \ref{thm:PDTK-Int-Opt},
we have
\[
\norm{s_m}_{\Banach^{m/(m-1)}}=\langle s_m,d_G\norm{\cdot}_{\Banach^{m/(m-1)}}(s_m) \rangle
=\sum_{i=1}^n\beta_i\langle s_m,\Phi_2(\vx_i,\cdot) \rangle,
\]
and
\[
\beta_i=
\norm{\valpha}_{m/(m-1)}^{1-m/(m-1)}c_i
=\norm{s_m}_{\Banach^{m/(m-1)}}^{-1/(m-1)}c_i,\quad
\text{for }i=1,\ldots,n.
\]
By the reproducing properties of $\Banach^{m/(m-1)}$, we compute the norm
\[
\norm{s_m}_{\Banach^{m/(m-1)}}=\sum_{i=1}^n\beta_is_m(\vx_i)
=\norm{s_m}_{\Banach^{m/(m-1)}}^{-1/(m-1)}\sum_{i=1}^nc_i\vB_m(\vx_i)\vc^{m-1}.
\]
Therefore,
\[
\norm{s_m}_{\Banach^{m/(m-1)}}^{m/(m-1)}=\vA_m\vc^m.
\]
\end{proof}

For an approximate problem of $f(\vx_1)\approx y_1,\ldots,f(\vx_n)\approx y_n$, the estimate function $s$ is solved by the regularization
\begin{equation}\label{eq:reg-approx}
\min_{f\in\Banach^{m/(m-1)}}\sum_{i=1}^n\left(f(\vx_i)-y_i\right)^2+\sigma\norm{f}_{\Banach^{m/(m-1)}}^{m/(m-1)},
\end{equation}
where $\sigma$ is a positive parameter.
In the same manner of Theorems \ref{thm:PDTK-Int-Opt} and \ref{thm:PDTK-Int-Norm}, the minimizer of optimization \eqref{eq:reg-approx} has the form as $s=B_m\vc^{m-1}$, where
the coefficients $\vc$ are uniquely solved by the minimization
\[
\min_{\vc\in\RR^n}\norm{\vA_m\vc^{m-1}-\vy}_2^2+\sigma\vA_m\vc^m.
\]

Finally, we study the error analysis of $\abs{s_m(\vx)-f(\vx)}$ when $f\in\Banach^{m/(m-1)}$.
We define the generalized power function
\begin{equation}\label{eq:powerfun}
P_m(\vx):=\min_{\vartheta_1,\vartheta_2,\ldots,\vartheta_n\in\RR}
\norm{\Phi_2(\vx,\cdot)-\sum_{i=1}^n\vartheta_i\Phi_2(\vx_i,\cdot)}_{\Banach^{m}},
\quad\text{for }\vx\in\Omega.
\end{equation}
Let $\vx_0:=\vx$ and $\tilde{\vA}_m:=\left(\Phi_m(\vx_{i_1},\vx_{i_2},\cdots,\vx_{i_m})\right)_{i_1,i_2,\ldots,i_m=0}^{n,n,\ldots,n}\in T_{m,n+1}$.
As in the proof of Theorems \ref{thm:PDTK-Int-Opt} and \ref{thm:PDTK-Int-Norm}, we can compute
\[
P_m(\vx)=\min_{\vartheta_1,\vartheta_2,\ldots,\vartheta_n\in\RR}
\left(\tilde{\vA}_m\tilde{\vvartheta}^m\right)^{1/m},
\]
where $\tilde{\vvartheta}:=\left(1,-\vartheta_1,-\vartheta_2,\ldots,-\vartheta_n\right)^T\in\RR^{n+1}$.
Since $\Banach^{2}$ is a reproducing kernel Hilbert space with the reproducing kernel $\Phi_2$,
$P_2$ is equal to the classical power function in \cite[Section 11.1]{Wendland2005} such that
$P_2(\vx)=\left(\Phi_2(\vx,\vx)-\vA_2^{-1}\vB_2(\vx)^2\right)^{1/2}$.

\begin{theorem}\label{thm:PDTK-Int-Error}
If $m$ is a positive even integer and $f\in\Banach^{m/(m-1)}$ satisfies interpolation condition \eqref{eq:int-cond}, then
\[
\abs{s_m(\vx)-f(\vx)}\leq2\norm{f}_{\Banach^{m/(m-1)}}P_m(\vx),\quad\text{for }\vx\in\Omega.
\]
\end{theorem}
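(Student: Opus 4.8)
The plan is to run the standard power-function error argument, adapted to the dual pairing of the reproducing kernel Banach space. Write $p := m/(m-1)$, so the conjugate exponent is $q = p/(p-1) = m$ and hence $\Banach^q = \Banach^m$, the space appearing in the definition \eqref{eq:powerfun} of $P_m$. First I would use the two-sided reproducing property (ii) to represent the pointwise error at a fixed $\vx\in\Omega$ as
\[
s_m(\vx)-f(\vx)=\langle s_m-f,\Phi_2(\vx,\cdot)\rangle,
\]
which is legitimate because $s_m\in\Banach^{m/(m-1)}$ (noted right after Theorem~\ref{thm:PDTK-Int}) and $f\in\Banach^{m/(m-1)}$ by hypothesis, so $s_m-f\in\Banach^p$, while $\Phi_2(\vx,\cdot)\in\Banach^q$ by property (i).

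The key observation is that $s_m-f$ vanishes at every data point: by Theorem~\ref{thm:PDTK-Int} and interpolation condition \eqref{eq:int-cond} we have $s_m(\vx_i)=y_i=f(\vx_i)$ for all $i$. Consequently, for any $\vartheta_1,\ldots,\vartheta_n\in\RR$, the reproducing property gives $\langle s_m-f,\sum_{i=1}^n\vartheta_i\Phi_2(\vx_i,\cdot)\rangle=\sum_{i=1}^n\vartheta_i\bigl(s_m(\vx_i)-f(\vx_i)\bigr)=0$, so we may subtract this interpolating combination for free:
\[
s_m(\vx)-f(\vx)=\Big\langle s_m-f,\ \Phi_2(\vx,\cdot)-\sum_{i=1}^n\vartheta_i\Phi_2(\vx_i,\cdot)\Big\rangle.
\]
Applying H\"older's inequality to the dual bilinear product $\langle g,h\rangle=\valpha^T\vbeta$ (where $\norm{g}_{\Banach^p}=\norm{\valpha}_p$ and $\norm{h}_{\Banach^q}=\norm{\vbeta}_q$), and then taking the infimum over $\vartheta_1,\ldots,\vartheta_n$, yields
\[
\abs{s_m(\vx)-f(\vx)}\leq\norm{s_m-f}_{\Banach^{m/(m-1)}}\,P_m(\vx),
\]
using $\Banach^q=\Banach^m$ and the definition \eqref{eq:powerfun}.

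To finish I would bound $\norm{s_m-f}_{\Banach^{m/(m-1)}}$: the triangle inequality gives $\norm{s_m-f}_{\Banach^{m/(m-1)}}\leq\norm{s_m}_{\Banach^{m/(m-1)}}+\norm{f}_{\Banach^{m/(m-1)}}$, and since $f$ is admissible in the norm-minimal interpolation \eqref{eq:opt-RKBS}, Theorem~\ref{thm:PDTK-Int-Opt} gives $\norm{s_m}_{\Banach^{m/(m-1)}}\leq\norm{f}_{\Banach^{m/(m-1)}}$; combining these produces the factor $2$ and the claimed bound. The computation is essentially routine; the one point deserving care is that, unlike the Hilbert-space case $m=2$ where orthogonality of the interpolation error to the approximation space would remove the constant, here there is no such orthogonality, so the factor $2$ genuinely arises from pairing the Banach-space triangle inequality with the norm-minimality of $s_m$. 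One should also confirm that H\"older's inequality is available for the bilinear product on $\Banach^p\times\Banach^q$, which is immediate from its explicit form $\langle g,h\rangle=\valpha^T\vbeta$ together with the classical sequence-space H\"older inequality.
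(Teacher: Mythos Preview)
Your proposal is correct and follows essentially the same approach as the paper: represent the pointwise error via the reproducing property, subtract a free combination $\sum_i\vartheta_i\Phi_2(\vx_i,\cdot)$ using that $s_m-f$ vanishes at the data, bound by the dual pairing (H\"older), minimize over $\vartheta$ to obtain $P_m(\vx)$, and then use the triangle inequality together with the norm-minimality of $s_m$ from Theorem~\ref{thm:PDTK-Int-Opt} to produce the factor $2$. Your added remarks on why the factor $2$ is needed in the Banach setting and on the availability of H\"older for the bilinear product are accurate and make the argument slightly more explicit than the paper's version.
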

\begin{proof}
By the reproducing properties of $\Banach^{m/(m-1)}$, the interpolation conditions of $f$ and $s_m$ show that
\[
\langle f-s_m,\Phi_2(\vx_i,\cdot) \rangle=f(\vx_i)-s_m(\vx_i)=y_i-y_i=0,
\]
for $i=1,2,\ldots,n$.
Thus, we have
\begin{equation}\label{eq:PDTK-Int-Error-1}
\begin{split}
&f(\vx)-s_m(\vx)=\langle f-s_m,\Phi_2(\vx,\cdot) \rangle\\
=&\langle f-s_m,\Phi_2(\vx,\cdot) \rangle
-\sum_{i=1}^n\vartheta_i\langle f-s_m,\Phi_2(\vx_i,\cdot) \rangle\\
=&\langle f-s_m,\Phi_2(\vx,\cdot)-\sum_{i=1}^n\vartheta_i\Phi_2(\vx_i,\cdot) \rangle,
\end{split}
\end{equation}
for any $\vartheta_1,\vartheta_2,\ldots,\vartheta_n\in\RR$.
Since $\frac{m-1}{m}+\frac{1}{m}=1$,
Equation \eqref{eq:PDTK-Int-Error-1} shows that
\begin{equation}\label{eq:PDTK-Int-Error-2}
\abs{f(\vx)-s_m(\vx)}
\leq\norm{f-s_m}_{\Banach^{m/(m-1)}}
\norm{\Phi_2(\vx,\cdot)-\sum_{i=1}^n\vartheta_i\Phi_2(\vx_i,\cdot)}_{\Banach^{m}}.
\end{equation}
Moreover, since Theorem \ref{thm:PDTK-Int-Opt} guarantees that $\norm{f}_{\Banach^{m/(m-1)}}\geq\norm{s_m}_{\Banach^{m/(m-1)}}$,
we have
\begin{equation}\label{eq:PDTK-Int-Error-3}
\norm{f-s_m}_{\Banach^{m/(m-1)}}
\leq\norm{f}_{\Banach^{m/(m-1)}}+\norm{s_m}_{\Banach^{m/(m-1)}}
\leq2\norm{f}_{\Banach^{m/(m-1)}}.
\end{equation}
We conclude from Equations \eqref{eq:PDTK-Int-Error-2} and \eqref{eq:PDTK-Int-Error-3} that
\[
\abs{f(\vx)-s_m(\vx)}
\leq2\norm{f}_{\Banach^{m/(m-1)}}
\min_{\vartheta_1,\vartheta_2,\ldots,\vartheta_n\in\RR}\norm{\Phi_2(\vx,\cdot)-\sum_{i=1}^n\vartheta_i\Phi_2(\vx_i,\cdot)}_{\Banach^{m}},
\]
hence that the proof is completed by Equation \eqref{eq:powerfun}.
\end{proof}

Let the fill distance
$h:=\sup_{\vx\in\Omega}\min_{i=1,2,\ldots,n}\norm{\vx-\vx_i}_2$.
If $\Phi_2\in\Cont^{2\gamma}(\Omega\times\Omega)$ for $\gamma\in\NN$,
then \cite[Theorem 11.13]{Wendland2005} guarantees that
\begin{equation}\label{eq:PDTK-Int-Error-h-0}
P_2(\vx)\leq C_{\Omega}h^{\gamma},\quad\text{for }\vx\in\Omega,
\end{equation}
where $C_{\Omega}$ is a positive constant independent of $h$ and $\vx$.

\begin{theorem}\label{thm:PDTK-Int-Error-h}
If $m$ is a positive even integer, $f\in\Banach^{m/(m-1)}$ satisfies interpolation condition \eqref{eq:int-cond},
and $\Phi_2\in\Cont^{2\gamma}(\Omega\times\Omega)$ for $\gamma\in\NN$,
then
\[
\abs{f(\vx)-s_m(\vx)}\leq Ch^{\gamma},\quad\text{for }\vx\in\Omega,
\]
where $C$ is a positive constant independent of $h$ and $\vx$.
\end{theorem}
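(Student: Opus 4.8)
The plan is to chain together the abstract error estimate of Theorem \ref{thm:PDTK-Int-Error} with the classical power-function bound \eqref{eq:PDTK-Int-Error-h-0}; the only missing link is a comparison between the generalized power function $P_m$ and the classical power function $P_2$. First I would observe that for any $g=\vbeta^T\vphi\in\Banach^2$ one has $g\in\Banach^m$ and $\norm{g}_{\Banach^m}=\norm{\vbeta}_m\leq\norm{\vbeta}_2=\norm{g}_{\Banach^2}$, since $m\geq2$ and the $\ell^m$-norm is dominated by the $\ell^2$-norm. In particular, for each fixed $\vx\in\Omega$ and each $\vartheta_1,\ldots,\vartheta_n\in\RR$, the residual $\Phi_2(\vx,\cdot)-\sum_{i=1}^n\vartheta_i\Phi_2(\vx_i,\cdot)$ lies in $\Banach^2$ (it is a finite combination of elements $\Phi_2(\vx_i,\cdot),\Phi_2(\vx,\cdot)$ of the reproducing kernel Hilbert space $\Banach^2$), hence also in $\Banach^m$, with $\norm{\cdot}_{\Banach^m}\leq\norm{\cdot}_{\Banach^2}$.

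Next I would take the minimum over $\vartheta_1,\ldots,\vartheta_n\in\RR$: plugging the $\Banach^2$-minimizer $\vartheta^\ast$ into the $\Banach^m$-functional and using \eqref{eq:powerfun} together with $P_2(\vx)=\left(\Phi_2(\vx,\vx)-\vA_2^{-1}\vB_2(\vx)^2\right)^{1/2}$ gives $P_m(\vx)\leq\norm{\Phi_2(\vx,\cdot)-\sum_{i=1}^n\vartheta^\ast_i\Phi_2(\vx_i,\cdot)}_{\Banach^m}\leq\norm{\Phi_2(\vx,\cdot)-\sum_{i=1}^n\vartheta^\ast_i\Phi_2(\vx_i,\cdot)}_{\Banach^2}=P_2(\vx)$ for all $\vx\in\Omega$. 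Since $\Phi_2\in\Cont^{2\gamma}(\Omega\times\Omega)$, the estimate \eqref{eq:PDTK-Int-Error-h-0} then yields $P_m(\vx)\leq P_2(\vx)\leq C_\Omega h^\gamma$ on $\Omega$.

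Finally, applying Theorem \ref{thm:PDTK-Int-Error} to $f$ and $s_m$ (both satisfying the interpolation conditions) gives $\abs{f(\vx)-s_m(\vx)}\leq2\norm{f}_{\Banach^{m/(m-1)}}P_m(\vx)\leq2\norm{f}_{\Banach^{m/(m-1)}}C_\Omega h^\gamma$ for all $\vx\in\Omega$. Thus the claim holds with $C:=2\norm{f}_{\Banach^{m/(m-1)}}C_\Omega$, which is independent of $h$ and $\vx$ because $\norm{f}_{\Banach^{m/(m-1)}}$ is a fixed number and $C_\Omega$ is independent of $h$ and $\vx$.

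I do not anticipate a genuine obstacle: the argument is just a sequence of inequalities resting on results already established. The one point needing care is the norm-comparison step, namely verifying that the relevant residuals truly lie in $\Banach^2$ so that the non-increasing inclusion $\Banach^2\subseteq\Banach^m$ applies, and that passing to the infimum is legitimate because the $\Banach^m$- and $\Banach^2$-minimizations in \eqref{eq:powerfun} range over the same parameter set $\RR^n$. Once this is in place, the result follows from Theorem \ref{thm:PDTK-Int-Error} and the standard estimate \eqref{eq:PDTK-Int-Error-h-0}.
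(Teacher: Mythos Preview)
Your proposal is correct and follows essentially the same route as the paper: establish the norm comparison $\norm{\cdot}_{\Banach^{m}}\leq\norm{\cdot}_{\Banach^{2}}$ from the $\ell^p$-norm monotonicity, deduce $P_m(\vx)\leq P_2(\vx)$ by minimizing over the same parameter set, then combine \eqref{eq:PDTK-Int-Error-h-0} with Theorem~\ref{thm:PDTK-Int-Error} to obtain the constant $C=2\norm{f}_{\Banach^{m/(m-1)}}C_\Omega$. Your write-up is in fact slightly more careful than the paper's in stating the correct inclusion $\Banach^{2}\subseteq\Banach^{m}$ and in making explicit that one plugs the $\Banach^{2}$-minimizer into the $\Banach^{m}$-functional.
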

\begin{proof}
The main idea of the proof is to compute the upper bound of $P_m(\vx)$ by the fill distance $h$.
The constructions of $\Banach^{m}$ and $\Banach^{2}$ shows that
$\norm{f}_{\Banach^{m}}\leq\norm{f}_{\Banach^{2}}$
for $f\in\Banach^{m}\subseteq\Banach^{2}$; hence
\begin{equation}\label{eq:PDTK-Int-Error-h-1}
\norm{\Phi_2(\vx,\cdot)-\sum_{i=1}^n\vartheta_i\Phi_2(\vx_i,\cdot)}_{\Banach^{m}}
\leq
\norm{\Phi_2(\vx,\cdot)-\sum_{i=1}^n\vartheta_i\Phi_2(\vx_i,\cdot)}_{\Banach^{2}},
\end{equation}
for any $\vartheta_1,\vartheta_2,\ldots,\vartheta_n\in\RR$.
Substituting Equations \eqref{eq:PDTK-Int-Error-h-1} into \eqref{eq:powerfun}, we have
\begin{equation}\label{eq:PDTK-Int-Error-h-2}
P_m(\vx)\leq P_2(\vx).
\end{equation}
Combining Equations \eqref{eq:PDTK-Int-Error-h-0} and \eqref{eq:PDTK-Int-Error-h-2}, we also have
\[
P_m(\vx)\leq C_{\Omega}h^{\gamma}.
\]
Let $C:=2\norm{f}_{\Banach^{m/(m-1)}}C_{\Omega}$. Therefore, the proof is completed by Theorem \ref{thm:PDTK-Int-Error}.
\end{proof}

Theorem \ref{thm:PDTK-Int-Error-h} guarantees that $s_m(\vx)\to f(\vx)$ when $h\to0$ for $\vx\in\Omega$, and moreover, $s_m\to f$ uniformly when $h\to0$.
A slight change in the proof of Equation \eqref{eq:PDTK-Int-Error-h-2} shows that $P_{m_1}(\vx)\leq P_{m_2}(\vx)$
when $m_1\geq m_2$.
We will study how the error decreases when $m$ increases in our future work.

\section*{Acknowledgments}

The author would like to acknowledge support for this project from the Natural Science Foundation of
China (12071157 and 12026602) and
the Natural Science Foundation of Guangdong Province (2019A1515011995 and 2020B1515310013).




\bibliographystyle{plain}
\bibliography{RKBSRef}

\begin{thebibliography}{1}

\bibitem{DingWei2016}
W.~Ding and Y.~Wei.
\newblock Solving multi-linear systems with {M}-tensors.
\newblock {\em J. Sci. Comput.}, 68:689--715, 2016.

\bibitem{QiChenChen2019}
L.~Qi, H.~Chen, and Y.~Chen.
\newblock {\em {T}ensor {E}igenvalues and {T}heir {A}pplications}.
\newblock Springer-Verlag, 2019.

\bibitem{WangHuangQi2018}
Y.~Wang, Z.-H. Huang, and L.~Qi.
\newblock Global uniqueness and solvability of tensor variational inequalities.
\newblock {\em J. Optim. Theory Appl.}, 177:137--152, 2018.

\bibitem{Wendland2005}
H.~Wendland.
\newblock {\em Scattered {D}ata {A}pproximation}, volume~17.
\newblock Cambridge University Press, Cambridge, 2005.

\bibitem{XuYe2019}
Y.~Xu and Q.~Ye.
\newblock Generalized {M}ercer kernels and reproducing kernel {B}anach spaces.
\newblock {\em Memoirs of the AMS}, 258(1243):1--122, 2019.

\bibitem{YanLingLingHe2019}
W.~Yan, C.~Ling, L.~Ling, and H.~He.
\newblock Generalized tensor equations with leading structured tensors.
\newblock {\em Appl. Math. Comput.}, 361:311--324, 2019.

\end{thebibliography}


%
%
%

\end{document}